\newtheorem{theorem}{Theorem}[section]
\newtheorem{definition}[theorem]{Definition}
\newtheorem{proposition}[theorem]{Proposition}
\newtheorem{claim}[theorem]{Claim}
\newtheorem{remark}[theorem]{Remark}
\newtheorem{question}[theorem]{Question}
\newtheorem{observation}[theorem]{Observation}
\newcommand{\R}{\mathcal{R}}
\renewcommand{\Re}{\mathbb{R}}
\renewcommand{\L}{\mathcal{L}}
\newenvironment{proof}[1][Proof]{ \noindent \textbf{#1: }}{$\Box$
\bigskip}
\title{No Krasnoselskii Number for General Sets in $\mathbb{R}^2$}
\author{Chaya Keller\thanks{Department of Computer Science, Ariel University, Israel. \texttt{chayak@ariel.ac.il}. Research partially supported by the Israel Science Foundation (grant no. 1065/20), the Arianne de Rotschild
fellowship, by the Hoffman Leadership program of the Hebrew
University, and by an Advancing Women in Science grant of the
Israel Ministry of Science and
Technology.}
\mbox{ }
and Micha A. Perles\thanks{Einstein Institute of Mathematics, Hebrew University, Jerusalem, Israel.
\texttt{perles@math.huji.ac.il}}
}
\begin{document}

\maketitle

\begin{abstract}
For a family $\mathcal{F}$ of sets in $\mathbb{R}^d$, the
Krasnoselskii number of $\mathcal{F}$ is the smallest $m$ such that
for any $S \in \mathcal{F}$, if every $m$ points of $S$ are visible from a
common point in $S$, then any finite subset of $S$ is visible from
a single point. More than 35 years ago, Peterson asked whether there exists a Krasnoselskii number for general sets in $\mathbb{R}^d$. Excluding results for special cases of sets with strong topological restrictions, the best known result is due to
Breen, who showed that if such a Krasnoselskii number in $\mathbb{R}^2$ exists,
then it is larger than $8$.

In this paper we answer Peterson's question in the negative by showing that there is no
Krasnoselskii number for the family of all sets in $\mathbb{R}^2$.
The proof is non-constructive, and uses transfinite induction and
the well ordering theorem.

In addition, we consider Krasnoselskii numbers with respect to
visibility through polygonal paths of length $ \leq n$, for which an
analogue of Krasnoselskii's theorem was proved by
Magazanik and Perles. We show, by an explicit
construction, that for any $n \geq 2$, there is no
Krasnoselskii number for the family of general sets in
$\mathbb{R}^2$ with respect to visibility through paths of length
$\leq n$. (Here the counterexamples are finite unions of line segments.)
\end{abstract}

\section{Introduction}

\begin{definition}
For a set $S \subset \mathbb{R}^d$ and two points $x,y \in S$, we
say that ``$x$ sees $y$ through $S$'' or ``$y$ is visible from $x$
through $S$'' if $S$ includes the segment $[x,y]$.
A set $S \subset \mathbb{R}^d$ is called ``starshaped'' if there
exists a point $x \in S$ that sees all other points in $S$. A set $S
\subset \mathbb{R}^d$ is called ``finitely starlike'' if any
finite subset of $S$ is visible from a common point.
\end{definition}

One of the best-known applications of Helly's theorem (that is, actually, equivalent to Helly's theorem, see~\cite{Bor77}) is the
following theorem, due to Krasnoselskii~\cite{Krasnoselskii}:
\begin{theorem}[Krasnoselskii]
Let $S$ be an infinite compact set in $\mathbb{R}^d$. If every $d+1$ points of $S$ are visible from a common point, then
$S$ is starshaped.
\end{theorem}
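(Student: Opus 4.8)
The plan is to prove that the \emph{kernel} of $S$, namely $\ker S := \{p \in S : [p,q]\subseteq S \text{ for every } q \in S\}$, is nonempty; since any point of the kernel sees all of $S$, this is exactly the assertion that $S$ is starshaped. For each $q \in S$ write $S_q := \{p \in \mathbb{R}^d : [p,q]\subseteq S\}$ for the set of viewpoints that see $q$. Because $S$ is compact, each $S_q$ is compact, and by definition $\ker S = \bigcap_{q\in S} S_q$. The hypothesis that every $d+1$ points of $S$ are visible from a common point says precisely that every $d+1$ of the sets $S_q$ have a point in common. If the $S_q$ were convex, Helly's theorem, combined with the finite intersection property for compact families, would immediately give $\bigcap_{q\in S} S_q \neq \emptyset$ and finish the proof. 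The trouble is that $S_q$ is only starshaped with respect to $q$, not convex, so Helly's theorem cannot be applied to this family as it stands. This non-convexity is the core obstacle.

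The heart of the argument is therefore to replace the visibility data by a genuinely convex family without enlarging or shrinking the intersection. First I would dispose of the trivial case: if $S$ is convex then $\ker S = S \neq \emptyset$, so I may assume $S$ is not convex. Then $S$ has points of local nonconvexity, and these form a nonempty compact set $Q \subseteq S$. The key geometric step is to attach to each $q \in Q$ a closed halfspace $H_q$ having $q$ on its boundary, read off from the local obstruction to convexity at $q$, with the property that every point that sees $q$ already lies in $H_q$; that is, $S_q \subseteq H_q$ (intuitively, to see past the reflex pinch at an lnc point one must stand on its inner side). Adjoining the supporting halfspaces of $\mathrm{conv}\,S$, I obtain a family of convex sets. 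Since any common viewpoint $p$ of $d+1$ chosen points lies in the corresponding $S_q$'s and hence in the corresponding $H_q$'s, the $d+1$-point hypothesis transfers verbatim to the statement that every $d+1$ members of this convex family meet.

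With this reformulation, I would run Helly's theorem on the convex family $\{H_q : q \in Q\}$ together with the supporting halfspaces of $\mathrm{conv}\,S$: every $d+1$ members intersect by the previous step, and compactness upgrades this from all finite subfamilies to the whole family, producing a common point $p^{*}$. It then remains to check that $p^{*}$ lies in $S$ (not merely in $\mathrm{conv}\,S$) and that $p^{*}$ genuinely sees every point of $S$, i.e. $p^{*} \in \ker S$. I expect the main obstacle to be the halfspace construction and this final verification: showing that the local nonconvexity at each $q$ really forces every global viewpoint onto one side of a hyperplane so that $S_q \subseteq H_q$, and confirming that controlling these finitely many ``worst'' obstruction directions suffices to guarantee unobstructed visibility to \emph{every} point of $S$. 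This passage from the non-convex, starshaped sets $S_q$ to the convex halfspaces $H_q$ is exactly where the compactness of $S$ and the Helly dimension $d+1$ enter in an essential way.
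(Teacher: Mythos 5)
The paper does not actually prove this statement: Krasnoselskii's theorem is quoted there as a classical result, with a citation to Krasnoselskii's 1946 paper and to Borwein's proof of its equivalence with Helly's theorem. So your proposal can only be judged on its own merits, and it has a genuine gap. Your reduction to $\ker S=\bigcap_{q\in S}S_q\neq\emptyset$, the compactness of each star $S_q$, and the appeal to the finite intersection property are all fine, and you correctly identify the non-convexity of the stars as the obstacle. But the key lemma on which you hang the whole argument is false: it is \emph{not} true that at a point of local nonconvexity $q$ the star $S_q$ is contained in a closed halfspace with $q$ on its boundary. Take $S$ to be a ``Pac-Man'': the closed unit disk with an open wedge of angle $\pi/2$, apex at the center $q$, removed. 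Then $q$ is a point of local nonconvexity, yet every point of $S$ sees $q$ along a radial segment, so $S_q=S$; and since the directions occupied by $S$ around $q$ fill an arc of length $3\pi/2>\pi$, no closed halfspace with $q$ on its boundary contains $S_q$. So the passage from the stars to a convex family, as you set it up, collapses at the first step.

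Moreover, the endgame you defer (``check that $p^{*}$ lies in $S$ and sees every point'') is not a verification but the actual heart of the theorem. Helly plus compactness only hands you a point $p^{*}$ in $\mathrm{conv}(S)$ and in all your halfspaces; to conclude $p^{*}\in\ker S$ you must show that whenever $p^{*}$ fails to see some $x\in S$, one of the halfspaces chosen \emph{in advance} excludes $p^{*}$ --- and with halfspaces indexed by lnc points this cannot work, by the example above. The classical repair anchors halfspaces at metric projections rather than at lnc points: if $w\notin S$ and $q$ is a nearest point of $S$ to $w$, then $S_q\subseteq\{p:\langle p-q,\,w-q\rangle\le 0\}$, because a segment from $q$ in any direction with positive inner product with $w-q$ immediately enters the open ball $B(w,|w-q|)$, which is disjoint from $S$. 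The standard derivation of Krasnoselskii from Helly then applies Helly and compactness to the family $\{\mathrm{conv}(S_x):x\in S\}$ (every $d+1$ of these meet, by hypothesis) to get $z\in\bigcap_{x\in S}\mathrm{conv}(S_x)$, and uses such projection halfspaces, together with a careful limiting argument along the segment witnessing non-visibility, to strictly separate $z$ from some $\mathrm{conv}(S_q)$ if $z\notin\ker S$, reaching a contradiction. That separation argument, which is where compactness of $S$ does its real work, is entirely absent from your sketch.
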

In particular, the case $d=2$ of the theorem asserts that if an art gallery is so shaped that for every three paintings there is
a place where you can stand and see those three, then there is a place where you can stand and see all of the paintings.

Krasnoselskii's theorem does not hold for non-compact sets.
However, all known counter-examples satisfy the
weaker requirement of being finitely starlike. This led
Peterson~\cite{Peterson} to ask
the following:
\begin{question}[Peterson]
Does there exist a Krasnoselskii number $K(d)$ such that for any
infinite set $S \subset \mathbb{R}^d$, if every $K(d)$ points of $S$
are visible from a single point, then $S$ is finitely starlike?
\end{question}

Peterson's question appears in the Handbook of Combinatorics~\cite[Chapter 17, Section~6.1.2, written by Erd\H{o}s and Purdy]{EP95} and in a treatise on open problems in geometry by Croft, Falconer and Guy~\cite[Chapter E1]{Unsolved}.
It was studied in a number of works. A few positive results were obtained, for special cases of sets that satisfy additional topological conditions~\cite{Breen0,Breen1,Breen5}.
Those include Krasnoselskii number 3 for closed sets in the plane, and Krasnoselskii numbers $4$ and $5$ under more complex conditions on the boundary of the set.
On the other hand, for general sets in the plane, Peterson~\cite{Peterson} showed that if $K(2)$ exists, then it
must be greater than $3$. Breen~\cite{Breen1} improved Peterson's result, showing by an explicit example that if $K(2)$
exists, then it must be larger than $8$. For other partial results and related problems, see~\cite{survey} and the references therein.


\medskip In this paper we
answer Peterson's question in the negative
by showing that the Krasnoselskii number $K(2)$ does not exist. Formally, we prove the
following:
\begin{theorem}\label{Thm:Our1}
For any $k \geq 2$, there exists a set $S \subset \mathbb{R}^2$
such that any $2k+3$ points in $S$ are visible from a common point,
but there exist $2k+4$ points in $S$ that are not visible from a
common point.
\end{theorem}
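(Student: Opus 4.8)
The plan is to decouple the two requirements of the theorem: the ``negative'' part (some $2k+4$ points share no common viewpoint) will be built into a fixed, explicit ``core,'' while the ``positive'' part (every $2k+3$ points do) will be forced by a transfinite construction that enlarges the core. Throughout, $S$ will be kept disjoint from a fixed finite set $H_0$ of blocking segments, and the core points $b_1,\dots,b_{2k+4}$ will be the bad tuple. The guiding principle is that once $H_0$ is fixed so that no point can see all of $B:=\{b_1,\dots,b_{2k+4}\}$ while avoiding $H_0$, the negative part becomes automatic and monotone: it cannot be spoiled by later additions, as long as we only ever add points and segments that miss $H_0$.

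First I would construct the core. Place $B$ and $H_0$ in a small disk $D_0$ around the origin so that, for each $i$, the viewpoints that can see $b_i$ through an $H_0$-avoiding segment form a reflex wedge $V_i$ (equivalently, $b_i$ is hidden exactly from a thin cone of directions $C_i=\mathbb{R}^2\setminus V_i$). The combinatorial heart is to choose the cones $C_1,\dots,C_{2k+4}$ as a minimal cover of the circle of directions: the $C_i$ cover all directions, but deleting any single $C_i$ leaves an open gap. This yields exactly what we want: $\bigcap_{i=1}^{2k+4}V_i=\emptyset$ (so no point sees all of $B$), while for every index $i_0$ the intersection $\bigcap_{i\neq i_0}V_i$ contains an open arc of directions (so the $2k+3$ points $\{b_i:i\neq i_0\}$ are jointly visible from a positive-measure family of far-away viewpoints). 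Minimal circular covers of size $n$ exist for every $n\ge 2$, which is what makes the bound $2k+4$ (indeed any $n$) attainable. With this core, as noted above, the negative part of the theorem holds for any $S\supseteq B$ with $S\cap H_0=\emptyset$.

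Next I would force the positive part by transfinite induction. Well-order all $(2k+3)$-element subsets of $\mathbb{R}^2$ in type $\mathfrak{c}$ and build an increasing chain of sets, starting from the core, in $\omega$ successive ``rounds.'' In each round I would run a transfinite sub-induction over the currently present $(2k+3)$-subsets $T$: for each still-unserved $T$, choose a witness $w_T$ far out in a direction that (i) lies in $\bigcap_{i\in J}V_i$, where $J$ indexes the core points contained in $T$ (nonempty since $|J|\le 2k+3$), and (ii) makes each connecting segment $[w_T,t]$, $t\in T$, miss $H_0$; then add $w_T$ and these $2k+3$ segments to $S$. Every added segment avoids $H_0$, so the invariant $S\cap H_0=\emptyset$ --- and hence the negative part --- is preserved at every stage. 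Since any finite subset of the final union $S=\bigcup_i S_i$ already lies in some $S_i$, it is served in round $i+1$; thus every $(2k+3)$-subset of $S$ acquires a common viewpoint $w_T\in S$, giving the positive part.

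\textbf{The main obstacle} will be the witness-existence step: proving that at every stage a legal $w_T$ actually exists. The difficulty is the interaction between the rigid obstruction coming from the core points in $T$ (which confines the direction of $w_T$ to the open arc $\bigcap_{i\in J}V_i$) and the visibility constraints coming from the remaining, arbitrarily located points of $T$ (each of which forbids a cone of directions for $w_T$). The key geometric input is that $H_0$ is bounded and low-complexity, so from any point $t\notin H_0$ the set of directions along which $H_0$ is not seen is open and co-small, and along any already-placed segment its far continuation remains $H_0$-free; combined with the fact that only the $2k+3$ points of $T$ impose constraints at a given step, this should let me keep the admissible arc for $w_T$ nonempty. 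To make this robust I would maintain geometric invariants on the positions of all added points (e.g., keeping non-core points far from $D_0$ and routing the connecting segments so that the induced directional constraints stay thin), so that finitely many thin forbidden cones can never swallow the fixed-width arc supplied by the minimal-cover property. Verifying that these invariants are preserved under the additions, and organizing the round-by-round bookkeeping so that no subset is perpetually postponed, is the delicate part of the argument.
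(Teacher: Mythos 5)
Your approach is genuinely different from the paper's, but it has a fatal gap at its foundation: the inference that, because the blocked cones $C_1,\dots,C_{2k+4}$ cover all \emph{directions}, the visibility regions satisfy $\bigcap_{i=1}^{2k+4} V_i=\emptyset$. This is false. The cones have distinct apexes (each sits at, or next to, its own $b_i$), so covering the circle of directions only guarantees that every sufficiently \emph{distant} viewpoint fails to see some $b_i$; viewpoints in or near $D_0$ can evade all the cones simultaneously. Concretely: let $\alpha=2\pi/(2k+4)$, let $\Theta_i=[(i-1)\alpha,\,i\alpha]$ be the blocked direction-sets (a minimal cover), and place each $b_i$ on the unit circle at angle $(i-1)\alpha+\alpha/2$, i.e.\ in the central direction of its own cone, with its small blocking wall just beyond it. Then the direction from $b_i$ to the origin is antipodal to the center of $\Theta_i$, hence not in $\Theta_i$ (since $\alpha<\pi$), and no wall meets $[0,b_i]$; so the origin sees every $b_i$, and $\bigcap_i V_i\neq\emptyset$. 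This is not a removable technicality, because your final set $S$ necessarily contains points in and near $D_0$: the $b_i$ themselves, and the segments $[w_T,b_i]$, which must enter $D_0$ to reach them. So the negative half of the theorem --- the whole point of the construction --- is unproven. Worse, there is a structural tension: thin cones are exactly what you need for joint visibility of every $2k+3$ of the $b_i$ (your property (b)), while blocking \emph{all} viewpoints, near ones included, pushes toward fat cones (pinwheel- or comb-like wall configurations do block everyone, but then no point sees more than a few of the $b_i$, killing (b)). Whether any \emph{finite} wall configuration satisfies both requirements at once is essentially the ``explicit counterexample'' question that the paper lists as open, and nothing in your proposal settles it.

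The second gap is the one you flag yourself, and it interacts badly with the first: segments serving one core point may graze another point's wall $W_i$ arbitrarily closely along its length, so $S$ will contain points from which some wall subtends an angle close to $\pi$; a $(2k+3)$-tuple of such points can have forbidden direction-cones covering the whole circle, so no far witness exists, and the thin-cones invariant you hope to maintain is not implied by anything you set up --- it would have to be engineered into the routing of every segment across all $\omega$ rounds. It is instructive to see how the paper avoids both difficulties. First, it never fixes a static finite blocker: the forbidden set $B_\lambda$ of Proposition~\ref{Prop:Formal} is grown \emph{adaptively} during the transfinite induction --- at step $\lambda$, for each viewpoint $z$ that currently sees two points of the bad tuple $K$ via $A_\lambda^0$, one new point is placed into $B_\lambda$ so that $z$ can never see all of $K$ --- so the negative part holds against \emph{all} viewpoints by a cardinality count ($|B_\lambda|<\aleph$ always leaves room), with no finite-configuration geometry at all. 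Second, the half-plane reduction of Observation~\ref{obs:shutter} fixes once and for all the family of tuples needing witnesses (the $k$-subsets of the open lower half-plane): the points added to $S$ lie on the $x$-axis and never create new obligations, so a single transfinite induction suffices and your round-by-round bookkeeping, with its unresolved invariants, never arises. As written, your proposal establishes neither half of the theorem without substantial new ideas.
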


Unlike Breen's construction that leads to an $F_{\sigma}$-set, our proof is non-constructive and uses
transfinite induction and the well-ordering theorem.

\bigskip

In the second part of the paper, we consider a generalization of
the notion of visibility, namely visibility through polygonal
paths.
\begin{definition}
For a set $S \subset \mathbb{R}^d$ and $x,y \in S$, we say that
``$x$ sees $y$ through $S$ by a polygonal path of length $n$'' or
``$y$ is visible from $x$ through $S$ by a polygonal path of
length $n$'' if there exist distinct $x_1,x_2,\ldots,x_{n-1}$ and a polygonal path
$$ \langle x, x_1, \ldots, x_{n-1},y\rangle  =  [x,x_1]\cup[x_1,x_2]\cup \ldots \cup [x_{n-1},y]$$
that lies entirely within $S$.
\end{definition}

This definition of visibility is closely related to the notion of
$L_n$-sets, which are sets that are connected through polygonal
paths of length $\leq n$. This notion was defined by Horn and
Valentine~\cite{Horn}, and studied in numerous papers (see,
e.g.,~\cite{Bruckner,Evelyn1,Valentine}). Of course, the original
notion of visibility considered above corresponds to the case
of visibility through polygonal paths of length $n=1$.

In~\cite[Theorem~1.2]{Evelyn1}, Magazanik and Perles proved a
Krasnoselskii-type theorem with respect to visibility through
polygonal paths. Formally, they showed that for any $n$ and for
any compact and simply connected set $S \subset \mathbb{R}^2$, if any
three points in $S$ are visible from a common point through
polygonal paths of length $\leq n$, then all points of $S$ are visible
from a single point through polygonal paths of length $\leq n$.

This raises the natural analogue of Peterson's question for this
notion of visibility: Is there a number $K'(2,n)$ such
that for any finite set $S \subset \mathbb{R}^2$, if every $K'(2,n)$ points
of $S$ are visible from a common point through polygonal paths of
length $\leq n$, then any finite subset of $S$ is visible from a common
point through polygonal paths of length $\leq n$?

We answer this question in the negative. Namely, we prove,
by constructing a sequence of explicit examples, the following:
\begin{theorem}\label{Thm:Our2}
For any $n,k \geq 2$, there exists a set $S
\subset \mathbb{R}^2$ such that any $k$ points in $S$ are visible
from a common point through polygonal paths of length $\leq n$, but
there exist $k+1$ points in $S$ that are not visible from a common
point through polygonal paths of length $\leq n$.
\end{theorem}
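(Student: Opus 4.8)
The plan is to reduce this continuous visibility problem to a purely combinatorial statement about a finite graph. I would take $S$ to be a finite union of segments in \emph{general position} (no two collinear, all crossings proper and distinct) and let $H$ be its intersection graph: one vertex per segment, an edge for each crossing pair. Since $S$ is one-dimensional and no two segments are collinear, any straight segment contained in $S$ must lie inside a single segment of $S$; hence a polygonal path in $S$ bends only at crossing points, and a path of length $\le n$ from an interior point $x\in e$ to an interior point $y\in f$ exists iff $\mathrm{dist}_H(e,f)\le n-1$ (the $+1$ coming from the two end-pieces). Consequently, $k$ points lying on segments $f_1,\dots,f_k$ are visible from a common point iff some segment $e$ satisfies $\max_j \mathrm{dist}_H(e,f_j)\le n-1$. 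Writing $r=n-1$ and letting $B_r(v)$ denote the radius-$r$ ball in $H$, a common viewpoint exists iff $\bigcap_j B_r(f_j)\ne\emptyset$. Thus Theorem~\ref{Thm:Our2} reduces to constructing a segment-realizable graph $H$ whose radius-$r$ balls are $k$-wise intersecting (call this condition (A)) but not $(k+1)$-wise (condition (B)).

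For the combinatorial core I would imitate the sharpness example for Helly's theorem, making the balls behave like the $k+1$ facets of a $k$-simplex. Concretely I introduce $k+1$ \emph{target} vertices $t_0,\dots,t_k$ and $k+1$ \emph{witness} vertices $c_0,\dots,c_k$ obeying the anti-diagonal reachability pattern $\mathrm{dist}_H(t_i,c_j)\le r \iff i\ne j$, together with a scaffold keeping all non-target vertices within mutual distance $\le r$. This is consistent, since the triangle inequality only forces $\mathrm{dist}_H(t_i,c_i)\in[r+1,2r]$, a non-empty range for $r\ge 1$, i.e. $n\ge 2$. Then $c_{i_0}$ is a common $r$-center of every target except $t_{i_0}$, so any $k$ targets are coverable while all $k+1$ are not; for a mixed $k$-subset one picks a witness $c_{i_0}$ whose own target is absent, which covers the chosen targets (off-diagonal) and the chosen non-targets (small diameter), giving (A). I would realize this by an explicit, $(k+1)$-fold symmetric pinwheel of segments whose corridor lengths scale linearly in $n$, so that all the required distances come out exactly right for the given $r=n-1$.

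The hard part will be that the common viewpoint may be \emph{any} point of $S$, including a crossing point lying on two segments $u,v$ at once; from such a point one reaches everything within distance $r$ of $u$ \emph{or} of $v$, gaining the union $B_r(u)\cup B_r(v)$. In the anti-diagonal pattern this is fatal if two witnesses cross: the point $c_i\cap c_j$ reaches $t_\ell$ for every $\ell$ (via $c_i$ when $\ell\ne i$, via $c_j$ when $\ell\ne j$), covering all $k+1$ targets and destroying condition (B). Hence (B) must be strengthened to: for every crossing pair of segments $u,v$, some target is at distance $>r$ from both. The heart of the construction is therefore the geometric bookkeeping of the arrangement — arranging the witness/corridor segments to be pairwise non-crossing (for instance radial segments that meet only the arms and never one another), so that no viewpoint ever sits in two ``sectors'' simultaneously, while still keeping every non-target within distance $r$ for (A). Balancing these opposing demands — each target hidden from its own sector, yet everything else densely coverable, with no crossing leaking forbidden reachability — is the step I expect to absorb essentially all the work.

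Finally I would discharge the remaining continuous issues: verify the reduction on the measure-zero set of non-generic viewpoints (crossing points and endpoints) via the strengthened condition (B); remove any incidental collinearity by a small perturbation preserving the intersection graph; and observe that ``any $k$ points of $S$'' lie on at most $k$ segments and so are handled directly by (A). This yields, for each $n,k\ge 2$, an explicit finite union of segments in $\mathbb{R}^2$ witnessing the theorem.
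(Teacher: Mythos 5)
Your reduction of $n$-path visibility to distances in the segment--incidence graph is correct, and it is in fact the skeleton of the paper's own construction (there the ``each potential viewpoint misses exactly one target'' pattern is realized as $K_{k+1,k+1}$ minus a perfect matching, drawn on a convex $(2k+2)$-gon, with the targets being midpoints of alternate edges). But the proposal has a genuine gap: the construction itself is never produced. Everything hinges on the promised ``$(k+1)$-fold symmetric pinwheel'' whose distances ``come out exactly right,'' and you yourself flag that balancing condition (A) against the strengthened condition (B) ``is the step I expect to absorb essentially all the work.'' That step is the entire content of the theorem. Moreover, even at the purely combinatorial level your specification is incomplete: the anti-diagonal pattern plus a small-diameter scaffold yields (A), but it does not yield (B) --- you additionally need every vertex of $H$ (each target $t_j$ and each scaffold vertex, not only the witnesses) to lie at distance $>r$ from some target, as well as the crossing-pair version of this; none of that follows from the properties you listed, so it too must be engineered into the missing construction.

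Worse, your design constraints are mutually inconsistent in the base case $n=2$. There $r=1$, and your argument for (A) requires every witness to be within distance $1$ of every non-target vertex; under your general-position hypothesis (all intersections are proper crossings, no shared endpoints, no triple points) this forces the witnesses to pairwise cross. But you observed yourself that a crossing point $c_i\cap c_j$ of two witnesses reaches all $k+1$ targets, destroying (B). So ``witnesses pairwise non-crossing'' and ``all non-targets within mutual distance $r$'' cannot coexist when $n=2$ --- exactly the case on which any extension to $n>2$ must be built (the paper obtains $S_{n,k}$ by attaching tails of length $n-2$ to the targets of $S_{2,k}$). The paper escapes this precisely by \emph{not} working in general position: its segments share endpoints at the polygon's vertices, so a single point $a_{\nu}$ lies on $k$ segments at once and serves as the common viewpoint in Claim~\ref{cl:star1}, while crossing points are tamed not by forbidding them but by the assumption that no three diagonals of $P$ are concurrent, which caps at four the number of polygon vertices an interior crossing point can see by a $1$-path (Case 3 of the proof of Claim~\ref{cl:star2}). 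If you want to rescue your framework, drop the general-position requirement and allow many segments to share an endpoint; those shared endpoints are where the needed high-degree viewpoints come from.
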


The sets $S$ we construct in the proof of Theorem~\ref{Thm:Our2} are actually finite unions of closed line segments.

A similar result with respect to the related concept of visibility through {\it staircase} paths of length $\leq n$ was obtained by Breen~\cite{Breen4}. However, the construction of~\cite{Breen4} is significantly different from ours.


\medskip This paper is organized as follows: In Section~\ref{sec:Our1} we study Peterson's problem with respect to the classical definition of visibility and prove Theorem~\ref{Thm:Our1}. In Section~\ref{sec:Our2} we study visibility through polygonal paths of length $\leq n$ and prove Theorem~\ref{Thm:Our2}.

\section{Proof of Theorem~\ref{Thm:Our1}}
\label{sec:Our1}


\subsection{Some basic facts from set theory}
\label{sec:sub:set}

In this subsection we recall a few basic definitions and facts from set theory that will be used in our proof. For the sake of brevity, we present the definitions in a somewhat informal way. A formal treatment can be found in standard textbooks on basic set theory, e.g.,~\cite{Levy79}.

\medskip \noindent \emph{Well ordering.}
Let $S$ be a set. A binary relation $<$ on $S$ is called a \emph{well ordering} if:
\begin{enumerate}
	\item For any $x,y \in S$, exactly one of the following holds: either $x<y$, or $y<x$, or $x=y$.
	\item $"<"$ is transitive.
	\item Any non-empty subset $S'$ of $S$ has a first element $x_0$, namely, $\forall \emptyset \neq S' \subset S, \exists x_0 \in S': \forall x \in S', x_0 \leq x$.
\end{enumerate}
A pair $(S,<)$ where $<$ is a well ordering on $S$ is called a \emph{well ordered set}.

\medskip \noindent \emph{Transitive sets and ordinals.} A set $S$ is called \emph{transitive} if any element of $S$ is a subset of $S$, that is, if $(x \in S) \wedge (y \in x) \Rightarrow (y \in S)$. A set $O$ is called an \emph{ordinal} if it is transitive and $(O,\in)$ (i.e., $O$ with the relation $(x<y) \Leftrightarrow (x \in y)$) is a well-ordered set.

\medskip \noindent \emph{Initial section.} Let $(S,<)$ be a well-ordered set. An initial section of $S$ is a subset $S' \subset S$ of the form $S'=\{x \in S:x <y\}$, for some $y \in S$.

\medskip \noindent \emph{Isomorphism.} Two well-ordered sets $(S,<_S)$ and $(T,<_T)$ are \emph{isomorphic} if there exists a bijection $f:S \rightarrow T$ such that $\forall x,y \in S: (x <_S y) \Leftrightarrow (f(x) <_T f(y))$.

\medskip \noindent \emph{Two basic facts on ordinals.}
\begin{enumerate}
	\item For any set $\mathcal{O}$ of ordinals, $(\mathcal{O},\in)$ is a well-ordered set. In particular, any set of ordinals contains a minimal element with respect to the relation $\in$.
	
	\item Any well-ordered set $(S,<)$ is isomorphic to a unique ordinal.
\end{enumerate}

\medskip \noindent \emph{The well ordering theorem.} For any set $S$, there exists a relation $<$ on $S$ such that $(S,<)$ is a well-ordered set. (This theorem may be considered an axiom, being equivalent to the Axiom of Choice).

\medskip \noindent \emph{Initial ordinal.} For a set $S$, consider all possible well-orderings of $S$. Each of them is isomorphic to a unique ordinal. The corresponding set of ordinals has a minimal element (with respect to the relation $\in$). This element is called \emph{the initial ordinal} that corresponds to $S$.

\medskip \noindent \emph{Cardinals.} Informally, the cardinality of a set $S$ measures its `size'. Formally, the cardinal of $S$, denoted by $|S|$, is identified with the initial ordinal that corresponds to $S$. This allows comparing the `sizes' of any two sets, as any two ordinals are comparable by the relation $\in$. It is clear that we have $|S| \leq |T|$ if and only if there exists an injection $f:S \rightarrow T$.

\medskip \noindent \emph{Arithmetic of infinite cardinals.} We use two basic facts from cardinal arithmetic. If $S,T$ are infinite sets then $|S \times T|=|S \cup T|=\max(|S|,|T|)$. If, in addition, $|S|<|T|$, then $|T \setminus S|=|T|$.

\medskip \noindent \emph{Initial section of an initial ordinal.} It follows from the above definitions that if $O$ is the initial ordinal that corresponds to some set $S$ and $O'$ is an initial section of $O$, then $|O'|<|O|$.

\medskip \noindent \emph{Induction on ordinals and transfinite induction.} The induction principle on ordinals implies the following: Let $O$ be an ordinal, and let $P$ be a property of ordinals. If for any $O' \in O$, we have  $(\forall O''<O': P(O'')) \Rightarrow P(O')$, then $P(O)$. As any well-ordered set corresponds to a unique ordinal, this method can be used to prove claims on general well-ordered sets. Its application is commonly called \emph{transfinite induction}.

\medskip \noindent \emph{Application in our case.} A classical way to use transfinite induction to prove an assertion on a general set $S$ is to use the well-ordering theorem to define a well-ordering $<$ such that $(S,<)$ is isomorphic to the initial ordinal that corresponds to $S$, and then to use transfinite induction to prove the assertion for $(S,<)$. An important feature deployed in this method is that for any initial section $S'$ of $(S,<)$, we have $|S'|<|S|$, as was explained above.

We shall apply this method to the set $S$ all $k$-tuples of points in the lower half-plane (whose cardinality is clearly $\aleph$), and the fact that the cardinality of any initial section in our ordering is strictly smaller than $\aleph$ will play a crucial role in our proof.

\subsection{Proof of the theorem}

In this subsection we prove Theorem~\ref{Thm:Our1}. Let us recall its statement.


\medskip \noindent \textbf{Theorem~\ref{Thm:Our1}.}
For any $2 \leq k \in \mathbb{N}$, there exists a set $T=T(k) \subset \mathbb{R}^2$ such that the following holds:
\begin{enumerate}
\item Any $2k+3$ points of $T$ are seen, through $T$, from a common point $x \in T$.
\item There are $2k+4$ points in $T$ that are not seen, through $T$, from any point of $T$.
\end{enumerate}

We divide the presentation into three parts. First, we reduce the problem to proving the existence of a subset $S$ of the real line that satisfies certain conditions, called a \emph{$k$-shutter}. Then, we describe the transfinite induction argument, without getting into the geometric details. Finally, we present a formal proof that fills in the geometric part.

\subsubsection{Reduction to proving the existence of a $k$-shutter}

Let us denote by $\Re_+^2$ the upper open half-plane $\Re_+^2=\{   (x,y) \in \Re^2 | y>0  \}$, and by $\Re_-^2$ the lower open half-plane $\Re_-^2=\{   (x,y) \in \Re^2 | y<0  \}$. For two points $x \in \Re_+^2,y \in \Re_-^2$, and a subset $S$ of the $x$-axis, $S \subset\{ (x,0) | x \in \Re \}$, we say that \emph{$x$ sees $y$ via $S$}, if $x$ sees $y$ through $\Re_+^2 \cup S  \cup \Re_-^2$.

The following definition plays a central role in the proof of Theorem~\ref{Thm:Our1}.
\begin{definition}
A subset $S$ of the $x$-axis, $S \subset \{(x,0)  | x \in \Re \}$, is called a \emph{$k$-shutter}, if:

\noindent (a) For any $k$-tuple $\{a_1, \ldots , a_k\} \subset \Re_-^2$ there exists a point $z \in \Re_+^2$ that sees each $a_i$ $(1 \leq i \leq k)$ via $S$, and

\noindent (b) There exists a $(k+1)$-tuple $K=\{y_1, \ldots , y_{k+1}\} \subset \Re_-^2$ such that no point in $\Re_+^2$ sees all the points of $K$ via $S$.
\end{definition}

Note that, by symmetry, the definition of a $k$-shutter is not sensitive to interchanging $\Re_+^2$ and $\Re_-^2$. In addition, it is easy to see that for $k \geq 2$, any $k$-shutter $S$ satisfies $|S| \geq 2$ and does not include a segment.

\begin{observation}
\label{obs:shutter}
In order to prove Theorem~\ref{Thm:Our1}, it suffices to prove that for any $k \in \mathbb{N}$, there exists a $k$-shutter $S$.
\end{observation}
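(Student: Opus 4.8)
The plan is to take a $k$-shutter $S$ and set $T = \Re_+^2 \cup S \cup \Re_-^2$, then verify the two properties of Theorem~\ref{Thm:Our1} directly. The foundation is a complete description of the visibility relation inside $T$, obtained by partitioning $T$ into the three regions $\Re_+^2$, $S$, $\Re_-^2$. Each of the following facts is proved by a one-line convexity argument: (i) any two points lying in the same open half-plane see each other through $T$; (ii) a point $s \in S$ sees every point of $\Re_+^2 \cup \Re_-^2$ through $T$ (the open segment from $s$ to a half-plane point stays in that half-plane), but sees no other point of $S$, because a $k$-shutter contains no segment; and (iii) a point $x \in \Re_+^2$ sees a point $y \in \Re_-^2$ through $T$ if and only if $x$ sees $y$ via $S$ in the sense of the shutter definition, since the segment $[x,y]$ meets the $x$-axis in a single point, which must belong to $S$. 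I would also invoke the stated symmetry of the shutter definition under reflection across the $x$-axis to obtain, alongside properties (a) and (b), their mirror images: for every $k$-tuple in $\Re_+^2$ some point of $\Re_-^2$ sees all of it via $S$ (call this (a$'$)), and there is a $(k+1)$-tuple $K' \subset \Re_+^2$ that no point of $\Re_-^2$ sees via $S$ (call this (b$'$)).

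For property 2, the bad configuration will be $P = K \cup K' \cup \{s_1,s_2\}$, where $K \subset \Re_-^2$ and $K' \subset \Re_+^2$ are the $(k+1)$-tuples from (b) and (b$'$), and $s_1 \neq s_2$ are two points of $S$ (which exist since $|S| \geq 2$ for $k \geq 2$). This is a set of exactly $(k+1)+(k+1)+2 = 2k+4$ distinct points, the three pieces lying in disjoint regions. To see that no $z \in T$ sees all of $P$, I split by the region of $z$: if $z \in \Re_+^2$ it already fails to see all of $K$ by (b); if $z \in \Re_-^2$ it fails on $K'$ by (b$'$); and if $z \in S$ it cannot see both $s_1$ and $s_2$ by fact (ii). These three cases exhaust $T$.

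For property 1, given any $P \subset T$ with $|P| \leq 2k+3$, I would write $P_+ = P \cap \Re_+^2$, $P_0 = P \cap S$, and $P_- = P \cap \Re_-^2$. The counting observation is that at least one of $|P_0| \leq 1$, $|P_+| \leq k$, $|P_-| \leq k$ must hold, since otherwise $|P| \geq 2+(k+1)+(k+1) = 2k+4$. In the first case a point of $S$ is a common center (the unique point of $P_0$ if $|P_0|=1$, or any point of $S$ if $P_0 = \emptyset$), using fact (ii). In the second case, property (a$'$) supplies a point $z \in \Re_-^2$ seeing all of $P_+$ via $S$ (padding $P_+$ to a $k$-tuple if needed); by facts (i)--(iii) this $z$ also sees $P_-$ and $P_0$, hence all of $P$. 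The third case is symmetric, using (a).

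The conceptual crux, and the step I expect to require the most care, is recognizing the role of the points of $S$ as universal centers for half-plane points: this is precisely what forces the bad set to spend two of its points inside $S$, and it is the reason the threshold shifts from the shutter's $(k,k+1)$ to the theorem's $(2k+3,2k+4)$. Getting the arithmetic $2k+4 = (k+1)+(k+1)+2$ to match simultaneously the impossibility in property 2 and the covering pigeonhole in property 1 is the heart of the reduction; the underlying visibility facts are routine once they are stated.
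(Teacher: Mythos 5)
Your proposal is correct and follows essentially the same route as the paper: the same set $T=\Re_+^2\cup S\cup \Re_-^2$, the same pigeonhole argument for the positive direction (splitting on whether $P\cap S$ is small or one of the half-plane parts has at most $k$ points), the same use of reflection symmetry to get the mirrored shutter properties, and the same bad $(2k+4)$-set $K\cup K'\cup\{s_1,s_2\}$. Your write-up merely makes the underlying visibility facts (i)--(iii) more explicit than the paper does.
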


\begin{proof}[Proof of Observation~\ref{obs:shutter}]
Given a $k$-shutter $S$, let $T=\Re_+^2\cup \Re_-^2\cup S$, and
let $T'$ be a set of $2k+3$ points of $T$. If $T' \cap S =\emptyset$, then any point of $S$ sees all points of $T'$ through $T$. If $T' \cap S$ consists of a single point $x$, then $x$ sees all the points of $T'$ through $T$. If $|T' \cap S| \geq 2$, then by the pigeonhole principle, either $T' \cap \Re_-^2$ or $T' \cap \Re_+^2$ contains at most $k$ points. W.l.o.g., assume $|T' \cap \Re_-^2| \leq k$. As $S$ is a $k$-shutter, some point $z \in \Re_+^2$ sees $T' \cap \Re_-^2$ through $T$. The point $z$ clearly sees all points of $T'$ through $T$.

On the other hand, take $K_1 \subset \Re_-^2$ to be a $(k+1)$-set that is not seen via $S$ by any point in $\Re_+^2$, and take $K_2 \subset \Re_+^2$ to be a $(k+1)$-set that is not seen via $S$ by any point in $\Re_-^2$. Since $|S| \geq 2$ and $S$ does not include a segment, there exist two points $s_1,s_2 \in S$ that do not see each other through $S$. Then the set $K_1 \cup K_2 \cup \{ s_1,s_2  \} \subset T$ is a $(2k+4)$-sized subset of $T$ that is not seen by any point through $T$.
\end{proof}

\subsubsection{The transfinite induction argument}

To prove the existence of a $k$-shutter, we use transfinite induction. Let $K=\{y_1,\ldots,y_{k+1}\} \subset \Re_-^2$ be a fixed $(k+1)$-set of points in $\Re_-^2$, and let $U = \{ \{a_1,a_2,\ldots,a_k\} \subset \Re_-^2\}$ be the family of all $k$-subsets of $\Re_-^2$. Clearly, $|U|=\aleph$.
Let $O$ be the initial ordinal of cardinality $\aleph$, namely, $|O|=\aleph$ and for each $\lambda \in O$, $|\{     \alpha \in O : \alpha<\lambda      \}|<\aleph$. Let $\lambda \mapsto u_{\lambda}$ ($\lambda \in O$) be a bijection between $O$ and $U$. (Whose existence follows from the well-ordering theorem.) We denote by $<$ the corresponding ordering on $U$.
We shall prove the existence of a $k$-shutter $S$ such that each $k$-set in $U$ is seen via $S$ by some $z \in \Re_+^2$, while no $z \in \Re_+^2$ sees the $(k+1)$-set $K$ via $S$.

We construct $S$ by a transfinite induction process, that corresponds to induction on the ordinal $O$. We index the steps of the process by the elements of $O$, and in step $\lambda$ we `take care' of the $k$-set $a^{\lambda}=\{a_1^{\lambda},\ldots,a_k^{\lambda}\}$ that corresponds (in the isomorphism) to $\lambda \in O$.

During the process, we construct two increasing sequences of subsets of the $x$-axis: $\{A_{\lambda}\}_{\lambda \in O}$ and $\{B_{\lambda}\}_{\lambda \in O}$ (where `increasing' means that if $\lambda_1<\lambda_2$ then $A_{\lambda_1} \subseteq A_{\lambda_2}$ and $B_{\lambda_1} \subseteq B_{\lambda_2}$). The sets $A_{\lambda}$ contain points that will be included in $S$, while the sets $B_{\lambda}$ contain points will not be included in $S$. Of course, we make sure that the $B_{\lambda}$'s are disjoint from the $A_{\lambda}$'s. In addition, we make sure that at each step $\lambda$, we have $|A_{\lambda}|, |B_{\lambda}| \leq \max(|\lambda|,\aleph_0)$.

At Step $\lambda$, we define $A_{\lambda}$ and $B_{\lambda}$ by adding points to the sets $A_{\lambda}^0 = \bigcup_{\lambda'<\lambda} A_{\lambda'}$ and $B_{\lambda}^0=\bigcup_{\lambda'<\lambda} B_{\lambda'}$, respectively. First we add at most $\max(|\lambda|,\aleph_0)$ points to $B_{\lambda}^0$ to form $B_{\lambda}$ in a certain way to be explained shortly. Then we add $k-1$ (or fewer) points to $A_{\lambda}^0$ to form $A_{\lambda}$, in such a way that the set $a^{\lambda}=\{a_1^{\lambda},\ldots,a_k^{\lambda}\}$ is seen through $A_{\lambda}$ by some $z \in \Re_+^2$, while the set $K$ is not seen through $A_{\lambda}$ by any $z \in \Re_+^2$. The points added to $B_{\lambda}$ are responsible for the latter condition -- they are chosen in such a way that forcing $A_{\lambda}$ to avoid them will guarantee that $K$ is not seen via $A_{\lambda}$ from any point in the upper open half-plane.

At the end of the process, we set $S= \bigcup_{\lambda \in O} A_{\lambda}$. It is clear from the construction that any $k$-set $a^{\lambda} \in U$ is seen via $S$ by some $z \in \Re_+^2$ (since it is seen via $A_{\lambda}$) and also that $K$ is not seen via $S$ by any $z \in \Re_+^2$ (as otherwise, it would be seen via some $A_{\lambda}$, contradicting the construction).

The crucial observation that makes the definition of the sets $A_{\lambda}$ and $B_{\lambda}$ possible is that the cardinality of any initial section of $O$ is smaller than $\aleph$, and thus, at any step $\lambda$ of the process we have $|A_{\lambda}^0|, |B_{\lambda}^0|< \aleph$. This means that the sets $A_{\lambda}^0$ and $B_{\lambda}^0$ cover only a `very small' part of the $x$-axis, and so we have `enough room' to select the points we need to add in order to define $B_{\lambda}$ and $A_{\lambda}$, as will be shown formally below.

\subsubsection{Formal proof}

We prove the following proposition.
\begin{proposition}\label{Prop:Formal}
There exist increasing sequences of sets $\{A_{\lambda}\}_{\lambda \in O}$ and $\{B_{\lambda}\}_{\lambda \in O}$, such that, for any $\lambda \in O$, we have:
\begin{enumerate}
\item $A_{\lambda}, B_{\lambda} \subset \{  (x,0) |x \in \Re   \}$;

\item $A_{\lambda} \cap B_{\lambda} = \emptyset$;

\item $|A_{\lambda}|,|B_{\lambda}| \leq \max(|\lambda|,\aleph_0) < \aleph$;

\item The $k$-set $a^{\lambda}=\{a_1^{\lambda},\ldots,a_k^{\lambda}\}$ is seen via $A_{\lambda}$ by some $z \in \Re_+^2$, while the $(k+1)$-set $K=(y_1,\ldots,y_{k+1})$ is not seen via $A_{\lambda}$ by any $z \in \Re_+^2$.
\end{enumerate}
\end{proposition}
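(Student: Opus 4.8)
The plan is to build the two sequences by transfinite recursion on $\lambda \in O$, defining $A_\lambda$ and $B_\lambda$ from $A_\lambda^0=\bigcup_{\lambda'<\lambda}A_{\lambda'}$ and $B_\lambda^0=\bigcup_{\lambda'<\lambda}B_{\lambda'}$. Properties 1--3 together with the monotonicity and disjointness of the sequences are bookkeeping: since an initial section of $O$ has cardinality $<\aleph$ and each earlier set has size $\le\max(|\lambda'|,\aleph_0)$, the cardinal arithmetic recalled in Section~\ref{sec:sub:set} gives $|A_\lambda^0|,|B_\lambda^0|\le\max(|\lambda|,\aleph_0)<\aleph$, and adding finitely many points to $A_\lambda^0$ and at most $\max(|\lambda|,\aleph_0)$ points to $B_\lambda^0$ preserves the bounds. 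All the real content lies in realizing Property~4 at step $\lambda$ and in never destroying it at a later step.

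First I would translate visibility into a statement on the $x$-axis. For $z\in\Re_+^2$ and $y\in\Re_-^2$ the segment $[z,y]$ meets the axis in a single point $\pi(z,y)$, and $z$ sees $y$ via a set $A\subset\{(x,0)\}$ precisely when $\pi(z,y)\in A$. The one geometric fact I would lean on is that a point $z$ is \emph{pinned} by any two of its crossings: since $z$ lies both on the ray from $y_a$ through $\pi(z,y_a)$ and on the ray from $y_b$ through $\pi(z,y_b)$, it is determined by the pair $\bigl(\pi(z,y_a),\pi(z,y_b)\bigr)$. Hence the set of $z$ that see two fixed points $y_a,y_b$ via $A$ injects into $A\times A$ and has cardinality $\le|A|<\aleph$.

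The crucial design choice is to add at most $k-1$ new points to $A$ at each step. To make $a^\lambda=\{a_1^\lambda,\dots,a_k^\lambda\}$ visible I would force two of its crossings to coincide, choosing $z^*$ on the line through two of the $a_i^\lambda$ of different height (or, failing that, reusing a point already in $A_\lambda^0$), so that $z^*$ sees all of $a^\lambda$ through $A_\lambda^0$ together with only $k-1$ freshly added crossings. Because $K$ has $k+1$ points while we add at most $k-1$, any $w\in\Re_+^2$ seeing all of $K$ via $A_\lambda$ must have at least two of its $k+1$ crossings already in $A_\lambda^0$; by the pinning fact such a $w$ lies in the set $\mathcal W$ of points pinned by pairs from $A_\lambda^0$, and $|\mathcal W|\le\binom{k+1}{2}|A_\lambda^0|^2<\aleph$. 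This is exactly why the $B$-phase can precede the $A$-phase: $\mathcal W$ depends only on the old set $A_\lambda^0$. For each $w\in\mathcal W$, the inductive Property~4 ($K$ unseen via $A_\lambda^0$) supplies some crossing $\pi(w,y_j)\notin A_\lambda^0$, and I would place one such crossing into $B_\lambda$. Then in the $A$-phase I choose the $\le k-1$ new crossings off the now-fixed set $B_\lambda$: as $z^*$ varies in its one-parameter family the non-fixed crossings sweep out intervals, so only $<\aleph$ positions are excluded, while the single merged crossing is kept off $B_\lambda$ by the choice of which pair to merge. Any $w$ seeing $K$ via $A_\lambda$ lies in $\mathcal W$ and therefore misses the crossing reserved in $B_\lambda$, a contradiction; and since a point placed in $B$ is never re-admitted to $A$, this reservation keeps $K$ unseen at every later step as well.

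The step I expect to be the main obstacle is the geometric feasibility of the $A$-phase: realizing $a^\lambda$ with only $k-1$ new points while simultaneously dodging the $<\aleph$ forbidden values of $B_\lambda$, avoiding unwanted coincidences among the new crossings, and handling the degenerate configurations (all $a_i^\lambda$ at a common height, or $A_\lambda^0$ too small to reuse in the very first steps). The merging/reuse trick is what drops the count from $k$ to $k-1$, and this single unit of slack is precisely what powers the pigeonhole argument confining the dangerous viewers $\mathcal W$ to a new-point-independent, $<\aleph$-sized set; making the trick go through in every configuration is the delicate part. Once Property~4 holds at all $\lambda$, setting $S=\bigcup_{\lambda\in O}A_\lambda$ finishes the argument: a hypothetical viewer of $K$ through $S$ would use $k+1$ crossings, which by monotonicity all lie in a single $A_\lambda$, contradicting Property~4, whereas every $a^\lambda\in U$ is already seen via $A_\lambda\subseteq S$.
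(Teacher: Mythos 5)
Your overall architecture matches the paper's: transfinite recursion, a blocking set $B_\lambda$ reserving one missed crossing for each ``dangerous'' viewer determined by the old set $A_\lambda^0$, at most $k-1$ new $A$-points chosen off $B_\lambda$, and a pigeonhole-plus-pinning argument. But there is a genuine gap at the heart of it: your pinning lemma is false as stated, and the case where it fails is exactly the one your construction never rules out. If $y_a,y_b\in K$ and the point $p$ where the line through $y_a,y_b$ meets the $x$-axis happens to lie in $A_\lambda$, then \emph{every} point of the open ray of that line above the axis sees both $y_a$ and $y_b$ via the single point $p$; the map $z\mapsto\bigl(\pi(z,y_a),\pi(z,y_b)\bigr)$ collapses this whole ray to the diagonal pair $(p,p)$, so the set of viewers of $\{y_a,y_b\}$ does not inject into $A\times A$ and can have cardinality $\aleph$. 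Consequently (i) your $\mathcal W$ does not contain all dangerous viewers, and a continuum of them cannot be blocked within the budget $|B_\lambda|\le\max(|\lambda|,\aleph_0)$; and (ii) the pigeonhole step also breaks: if two of the $k+1$ crossings of a viewer $w$ coincide, all of them can lie among the at most $k-1$ \emph{new} points, so $w$ need not have two crossings in $A_\lambda^0$ at all, and such a $w$ cannot be pre-blocked because it depends on points chosen after $B_\lambda$ is fixed. This is not a far-fetched configuration: since $U$ consists of \emph{all} $k$-subsets of $\Re_-^2$, some $a^\lambda$ contain pairs of points of $K$, and for those your ``merge'' trick would insert exactly such a poison point $p$ into $A_\lambda$.

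The paper closes this hole in the induction basis, which your proposal treats as a mere degenerate case to be handled later: it puts into $B_0$ all $\binom{k+1}{2}$ intersections of lines spanned by pairs of points of $K$ with the $x$-axis. Since every $A_\lambda$ avoids $B_\lambda\supseteq B_0$, no point of any $A_\lambda$ ever lies on a line through two points of $K$; hence the $k+1$ crossings of any would-be viewer of $K$ are pairwise distinct, pigeonhole genuinely yields two distinct old crossings, and two distinct crossings do pin the viewer, making $|\mathcal W|<\aleph$ correct. With that single addition your argument goes through. Separately, your primary ``merge'' device for spending only $k-1$ new points is fragile: the $\binom{k}{2}$ candidate merged crossings are fixed points that could all lie in the (possibly infinite) set $B_\lambda$, and no merge pair exists when the $a_i^\lambda$ have equal heights while $A_\lambda^0$ is empty. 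The paper's reuse device --- your fallback --- is the robust one: once $A_0\neq\emptyset$, take the line $\ell$ through an arbitrary $x\in A_\lambda^0$ and $a_1^\lambda$, and choose $z\in\ell\cap\Re_+^2$ avoiding the fewer than $\aleph$ lines through some $a_i^\lambda$ ($i\ge 2$) and some point of $B_\lambda$, none of which can contain $\ell$.
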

Proposition~\ref{Prop:Formal} clearly implies Theorem~\ref{Thm:Our1}, since the set $S=\bigcup_{\lambda \in O} A_{\lambda}$ is a $k$-shutter, as was explained above.

\begin{proof}

The proof is by transfinite induction on the elements of $O$.

\medskip \noindent \emph{Induction basis.} We start by defining $A_0$ and $B_0$. (Note that there is no formal need to present the initial step separately. However, in our case this step is somewhat different from the other ones, and thus we have to present it.)

Consider all lines that are spanned by two points $y_i,y_j \in K $. Let $B_0$ be the set of all intersection points of these lines with the $x$-axis. Clearly, $|B_0| \leq {{k+1}  \choose {2}} $.

In order to define $A_0$, consider all lines that pass through at least one point of $a^0=\{a_1^0, \ldots, a_k^0  \}$ and at least one point of $B_0$.
The union of these lines does not cover the upper half-plane (being a finite union of lines), hence, there exists a point $z \in \Re_+^2$ that is not incident with any such line. Denote the intersections of the segments $[z,a_1^0], \ldots, [z,a_k^0] $ with the $x$-axis by $s_1, \ldots , s_t$ $(1 \leq t \leq k)$, and set $A_0=\{s_1, \ldots , s_t\}$. Note that by the construction of $B_0$,
$A_0 \cap B_0 = \emptyset$.

Both $A_0$ and $B_0$ are finite, and so we have $|A_0|,|B_0| \leq \max(0,\aleph_0)$. The point $z$ sees all $k$ points in $a^0=\{  a_1^0, \ldots a_k^0  \}$ via $A_0$. Furthermore, no point in $\Re_+^2$ can see $K$ via $A_0$. Indeed, $|A_0| \leq k$, and thus, by the pigeonhole principle, if some $z \in \Re_+^2$ sees $K$ via $A_0$, it must see two points $y_i,y_j$ via the same point $s_\ell \in A_0$. In such a case, $s_\ell \in B_0$ by the definition of $B_0$, contradicting the construction which assures that $A_0 \cap B_0 = \emptyset$. Therefore, $A_0$ and $B_0$ satisfy the assertion of the proposition.

\medskip \noindent \emph{Induction step.} For $\lambda \in O$, we describe the definition of $A_\lambda$ and $B_{\lambda}$, assuming that for any $\lambda'<\lambda$, $A_{\lambda'}$ and $B_{\lambda'}$ have been defined and satisfy the assertion. As described above, we denote
\[
A_{\lambda}^0=\cup_{\lambda'<\lambda} A_{\lambda'} \qquad \mbox{ and } \qquad B_{\lambda}^0=\cup_{\lambda'<\lambda} B_{\lambda'}.
\]


\noindent \textbf{Defining $B_{\lambda}$.} Consider the family $\bar{ \L}$ of lines that pass through at least one point of $A_{\lambda}^0$ and at least one of the points of $K=\{y_1, \ldots, y_{k+1}\}$. Let $Z \subset \Re_+^2$ be the set of all intersection points of two lines in $\bar{\L}$ that lie in $\Re_+^2$. 
Fix $z \in Z$. Since $z$ does not see all $k+1$ points of $K$ via $A_{\lambda}^0$ (by the induction hypothesis), there exists $1 \leq i \leq k+1$ such that the intersection point between $[z,y_i]$ and the $x$-axis is not in $A_{\lambda}^0$. We add such an intersection point to $B_{\lambda}^0$ (if there are several such points, we just add one of them arbitrarily). We repeat this process for all $z \in Z$. The resulting set is $B_{\lambda}$. (If $Z= \emptyset$ then $B_{\lambda}=B_{\lambda}^0$.)

Note that since $|A_{\lambda}^0|, |B_{\lambda}^0| \leq \max \{ |\lambda |, \aleph_0   \}$, it follows that $|B_{\lambda}| \leq \max \{|\lambda |, \aleph_0  \}$. Furthermore, by construction, $B_{\lambda} \cap A_{\lambda}^0   =  \emptyset$ and $B_{\lambda}^0 \subset B_{\lambda}$.

\medskip \noindent \emph{Motivation behind the definition of $B_{\lambda}$.} In the definition of $A_{\lambda}$ to be presented below, we add at most $k-1$ points to $A_{\lambda}^0$. As no point $z \in \Re_+^2$ can see more than a single point of $K$ via the same point in $A_{\lambda}$ (due to the definition of $B_0$), a point $z \in \Re_+^2$ can see $K$ via $A_{\lambda}$ only if it sees at least two points of $K$ via $A_{\lambda}^0$. $Z$ is the set of all `dangerous' points in the upper half-plane, that see via $A_{\lambda}^0$ at least two points of $K$. By our definition of $B_{\lambda}$, we ensure that once we define $A_{\lambda}$ such that $A_{\lambda} \cap B_{\lambda} = \emptyset$, no point in $Z$ will see via $A_{\lambda}$ all $k+1$ points of $K$. This guarantees that no point $z \in \Re_+^2$ can see $K$ via $A_{\lambda}$.

\medskip \noindent \textbf{Defining $A_{\lambda}$.} $A_{\lambda}$ is obtained by adding at most $k-1$ points to $A_{\lambda}^0$. These points are chosen such that $a^{\lambda} = \{  a_1^{\lambda}, \ldots ,  a_k^{\lambda}    \} $ will be seen via $A_{\lambda}$ by some point $z$ in the upper open half-plane. Furthermore, the point $z$ will be a point that sees $a_1^{\lambda}$ via $A_{\lambda}^0$. (This condition allows us to add only $k-1$ points, rather than $k$ points.)

Let $x \in A_{\lambda}^0$ be chosen arbitrarily, and let $\ell$ be the line that passes through $x$ and $a_1^{\lambda}$. Let
\[
\L = \{ \ell(a_i^{\lambda},b)  | 2 \leq i \leq k, b \in B_{\lambda}       \}
\]
be the set of all lines $\ell (a_i^{\lambda},b)$ that pass through some point $a_i^{\lambda} \in a^{\lambda}$, $(2 \leq i \leq k)$, and some point $b \in B_{\lambda}$.

Since $|B_{\lambda}|  \leq \max(|\lambda|,\aleph_0) < \aleph$, and since any line in $\L$ intersects $\ell$ in at most one point in the upper open half-plane,
it follows that there exists a point $z \in \ell \cap \Re_+^2$ that is not covered by any line of $\L$.
(Recall that no line in $\L$ coincides with $\ell$, since $\ell$ intersects the $x$-axis in $x \in A_{\lambda}^0$, while any line in $\L$ intersects the $x$-axis in some point of $B_{\lambda}$.)

\medskip The set $A_{\lambda}$ is obtained from $A_{\lambda}^0$ by adding the intersections of the segments $[z, a_i^{\lambda}]$, for $2 \leq i \leq k$, with the $x$-axis. Note that the intersection of $[z, a_1^{\lambda}]$ with the $x$-axis is included in $A_{\lambda}^0$ so there is no need to add it. Also note that it is possible that some of the $k-1$ `added' points already belong to $A_{\lambda}^0$, and so fewer than $k-1$ points are added.

\medskip Obviously, $A_{\lambda}^0 \subset A_{\lambda} $ and $|A_{\lambda}| \leq \max(|\lambda|,\aleph_0)$. By the construction, the added points are not in $B_{\lambda}$, and hence, $A_{\lambda} \cap B_{\lambda} = \emptyset$. Furthermore, the point $z$ sees $a_1^{\lambda}, \ldots ,a_k^{\lambda}$ via $A_{\lambda}$. Finally, the definition of $B_{\lambda}$ guarantees that no $z \in \Re_+^2$ sees $K$ via $A_{\lambda}$. (See the motivation above.) Therefore, $A_{\lambda}$ and $B_{\lambda}$ satisfy the assertion.

\medskip \noindent By transfinite induction, this completes the proof of the proposition, and thus, also of the theorem.
\end{proof}

\begin{remark}
	One can strengthen the assertion of Theorem~\ref{Thm:Our1} by considering a countable dense collection of `forbidden' $(k+1)$-sets, instead of a single $(k+1)$-set $K$. No significant change in the proof method is needed.
\end{remark}

\section{Proof of Theorem~\ref{Thm:Our2}}
\label{sec:Our2}

In this section we prove that no Krasnoselskii-type theorem exists for the notion of visibility through a polygonal path of length $\leq n$ (in short, an \emph{$n$-path}). By constructing a sequence of planar sets $S_{n,k}$, $n,k \geq 2$, each consisting of a finite union of closed segments, we prove:

\medskip

\noindent \textbf{Theorem~\ref{Thm:Our2} (Restatement).} For any $n,k \geq 2$, there exists a set $S = S_{n,k}
\subset \mathbb{R}^2$ such that every $k$ points in $S$ are visible
from a common point through $n$-paths in $S$, but
some $k+1$ points of $S$ are not visible from a common
point through $n$-paths in $S$.

\medskip

\begin{proof}
The proof consists of two steps. In the first step we prove the existence of $S_{2,k}$ for any $k \geq 2$, and in the second step we extend the construction of the first step to obtain $S_{n,k}$ for $n>2$.

\medskip

\noindent \textbf{Step 1: Constructing $S_{2,k}$, $k \geq 2$.}
We obtain $S_{2,k}$ by an appropriate planar embedding of the complete bipartite graph $K_{k+1,k+1}$ with one perfect matching removed. Let $P \subset \Re^2$ be a convex $(2k+2)$-gon. We assume that no three diagonals of $P$ have a common point inside $P$. (This can be achieved by slightly perturbing the vertices of $P$.)

Label the vertices of $P$ cyclically (clockwise) $a_0,b_0,a_1,b_1,\ldots a_{k}, b_{k}$. We regard the indices as numbers modulo $k+1$  (see Figure~\ref{fig:fig1}). Define $\kappa= \lfloor  \frac{k}{2}  \rfloor$.

The matching to be removed will be $$\{[a_i,b_{i+\kappa}]   :  i=0,1,\ldots,k  \}   =  \{[a_{i-\kappa},b_{i}]   :  i=0,1,\ldots,k  \}.  $$
Note that the segments $[a_i,b_{i+\kappa}] $ are diagonals, not boundary edges of $P$.

Define, therefore, for $i=0,1,\ldots,k$, $$B_i=\bigcup\{ [a_{\nu},b_i]  :   \nu=0,1,\ldots,k , \qquad \nu \neq i-\kappa  \},$$ and let $$S_{2,k}  = \bigcup_{i=0}^{k} B_i.$$

$S_{2,2}$ is just a hexagon. $S_{2,3}$ is an octagon with four vertex-disjoint diagonals of order 3. See Figure~\ref{fig:fig1} for $S_{2,4}$ and $S_{2,5}$.

Claims~\ref{cl:star1} and~\ref{cl:star2} below assert that in $S_{2,k}$, every $k$ points are visible from a common point through a 2-path, but there exist $k+1$ points in $S_{2,k}$ that are not visible from any common point through 2-paths.

\begin{figure}
 \centering
    \centering
          \includegraphics[width=.45\columnwidth]{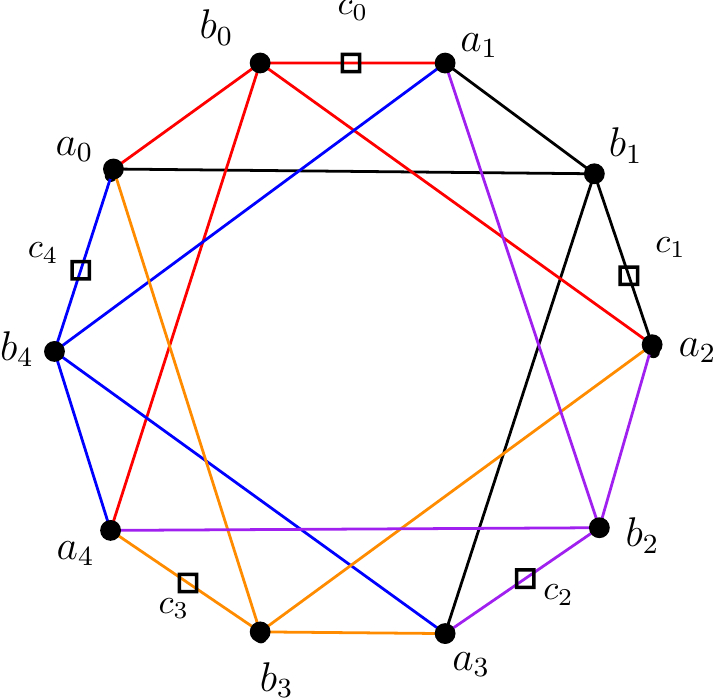}				
 \hspace{0.3cm}
    \centering
        \includegraphics[width=.45\columnwidth]{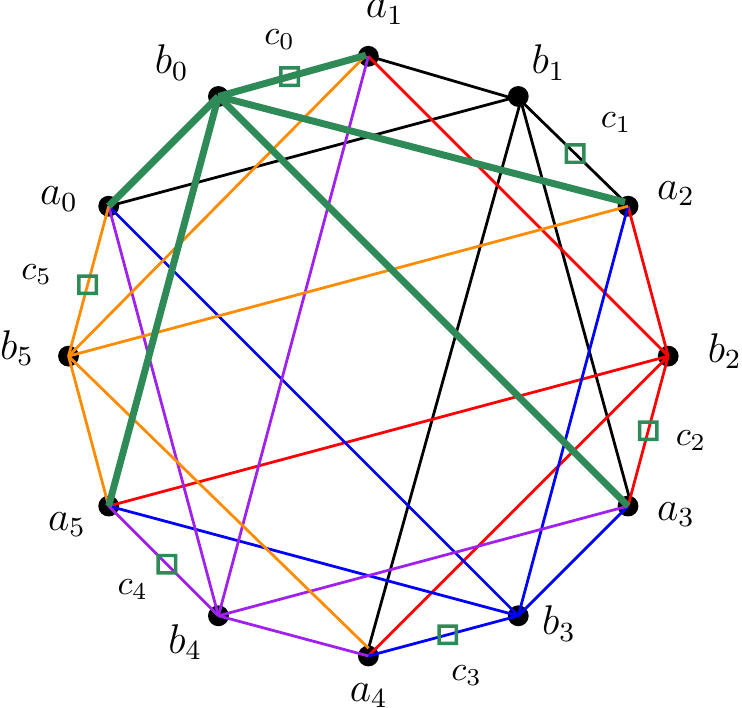}
 \caption{The left figure is $S_{2,4}$ and the right figure is $S_{2,5}$. Each $B_i$ is colored in a different color.}
\label{fig:fig1}
\end{figure}

\begin{claim}
\label{cl:star1}
For any $k$ points $x_1,\ldots,x_k \in S_{2,k}$ there exists a point $z \in S_{2,k}$ that sees all of them through 2-paths in $S_{2,k}$.
\end{claim}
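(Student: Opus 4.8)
The plan is to translate $2$-path visibility in $S_{2,k}$ into a purely combinatorial condition on the defining segments, and then to select the common viewpoint $z$ by a pigeonhole argument on the $b$-vertices.

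First I would record a reduction. Call the defining segments (the ``edges'') of $S_{2,k}$ the segments $[a_\nu,b_i]$ with $\nu\neq i-\kappa$. Since $P$ is convex with its vertices in general position, no two of these segments are collinear, so any non-degenerate straight segment contained in $S_{2,k}$ must lie inside a single edge (a positive-length piece cannot be covered by finitely many other segments, each of which meets a fixed line in at most one point). Consequently a $2$-path $\langle z,w,x\rangle\subseteq S_{2,k}$ forces $w$ to be a common point of an edge $e_1\ni z$ and an edge $e_2\ni x$; conversely, any two edges sharing a point yield such a path. This gives the clean criterion: $z$ sees $x$ through a $2$-path if and only if there are edges $e_1\ni z$ and $e_2\ni x$ with $e_1\cap e_2\neq\emptyset$. (Two edges share a point exactly when they have a common endpoint or their endpoints alternate around $P$; only the common-endpoint case is needed below.)

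Next I would analyze reachability from a vertex $a_\nu$. For each $x_j$ fix one edge $e_j=[a_{\mu_j},b_{m_j}]$ containing it. If $m_j\neq \nu+\kappa$, then $[a_\nu,b_{m_j}]$ is itself an edge of $S_{2,k}$ (the segment $[a_\nu,b_i]$ is present precisely when $i\neq\nu+\kappa$), and it meets $e_j$ at the shared vertex $b_{m_j}$; concatenating yields a $\le 2$-path $\langle a_\nu,b_{m_j},x_j\rangle$ inside $S_{2,k}$. Thus from $a_\nu$ one reaches, via a $2$-path, every point whose chosen edge has $b$-endpoint different from $b_{\nu+\kappa}$. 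The only potentially unreachable edges are those ending at the single ``bad'' vertex $b_{\nu+\kappa}$ --- and indeed $[a_\nu,b_{\nu+\kappa}]$ is exactly the deleted matching edge, which is what makes that vertex special.

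Finally I would choose $z$ by pigeonhole. The at most $k$ chosen edges $e_1,\dots,e_k$ use at most $k$ distinct $b$-endpoints, whereas there are $k+1$ vertices $b_0,\dots,b_k$; pick $m^{*}$ so that $b_{m^{*}}$ is the $b$-endpoint of no $e_j$, and set $z=a_{m^{*}-\kappa}$. Then the bad vertex for $z$ is $b_{m^{*}}$, which no $e_j$ touches, so the previous step shows $z$ sees every $x_j$ through a $2$-path, establishing the claim. I expect the main obstacle to be the reduction in the first step: verifying rigorously that $2$-paths decompose edge-by-edge (using the no-three-diagonals-concurrent and no-two-collinear properties) and correctly handling points $x_j$ that happen to be vertices or diagonal crossings, where the choice of supporting edge must be made. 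Once the reduction is secured, the vertex analysis and the pigeonhole selection are immediate.
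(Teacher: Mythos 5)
Your proof is correct and is essentially the paper's own argument: pigeonhole on the $k+1$ vertices $b_0,\ldots,b_k$ to find one, $b_{m^*}$, whose star contains none of the $x_j$, then take $z=a_{m^*-\kappa}$ and route each $2$-path through the $b$-endpoint of the edge containing $x_j$ (the paper phrases this as ``WLOG all $x_i$ lie in $B_1\cup\cdots\cup B_k$'' and takes $z=a_{k+1-\kappa}$). The only difference is your first-step reduction characterizing $2$-path visibility in both directions, which is correct but unnecessary here: for this claim you only need the trivial direction (exhibiting explicit $2$-paths), so the obstacle you anticipated does not actually arise.
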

\begin{proof}
Each point $x_i$ $(1 \leq i \leq k)$ belongs to some $B_j$, $0 \leq j \leq k$. (If $x_i$ lies in two $B_j$'s, choose one of them arbitrarily.) Hence we may assume, without loss of generality, that $x_1,\ldots,x_k$ are all contained in $B_1 \cup \ldots \cup B_k$. It follows that $a_{ k+1-\kappa}$ sees each $x_i$ through a 2-path. Indeed, $a_{ k+1-\kappa}$ is
connected by a segment in $S_{2,k}$ to each of $b_1,\ldots,b_k$,
and each $b_i$ is connected by a segment in $S_{2,k}$ to every point in $B_i$.
\end{proof}

\begin{claim}
\label{cl:star2}
For $i=0,1,\ldots,k$,
let $c_i $ be the midpoint of $[b_i,a_{i+1}]$, where the indices are taken modulo $k+1$. Then $c_0,\ldots,c_{k}$ are not visible from any common point in $S_{2,k}$ through 2-paths.
\end{claim}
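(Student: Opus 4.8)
The plan is to reduce ``$z$ sees the midpoint $c_i$'' to a purely combinatorial condition on the vertices of $P$, and then to classify all candidate viewpoints $z$ according to how many constituent segments of $S_{2,k}$ pass through them. Throughout, ``constituent segment'' means one of the segments $[a_\nu,b_j]$ making up $S_{2,k}$, and ``$2$-path'' means a path of length $\le 2$.

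First I would record two geometric facts. Each $c_i$ is an interior point of the boundary edge $E_i:=[b_i,a_{i+1}]$ of $P$ (here $a_{i+1}$ follows $b_i$ in the cyclic order, and $E_i\subset B_i\subset S_{2,k}$ since $i+1\neq i-\kappa$). Since $P$ is convex and in general position, no three of its vertices are collinear, so: (i) no two distinct constituent segments are collinear, whence any nondegenerate segment contained in $S_{2,k}$ lies inside a single constituent segment; and (ii) $E_i$ is the only constituent segment meeting the relative interior of $E_i$, because a diagonal cannot contain a boundary-interior point and two edges meet only at vertices. Combining these, if $[w,c_i]\subseteq S_{2,k}$ then $[w,c_i]\subseteq E_i$, so $w\in E_i$; and then $[z,w]$ lies in one constituent segment incident to an endpoint of $E_i$. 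The upshot is the clean reformulation: writing $N(z)$ for the set of endpoints of all constituent segments that contain $z$, a point $z$ sees $c_i$ through a $2$-path if and only if $b_i\in N(z)$ or $a_{i+1}\in N(z)$.

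Hence $z$ sees all of $c_0,\dots,c_k$ iff for every index $i$ (taken mod $k+1$) one of $b_i,a_{i+1}$ lies in $N(z)$; equivalently $B(z)\cup(A(z)-1)=\mathbb{Z}_{k+1}$, where $A(z)=\{\nu:a_\nu\in N(z)\}$, $B(z)=\{j:b_j\in N(z)\}$, and $A(z)-1=\{\nu-1:\nu\in A(z)\}$. The goal is to show this covering fails for every $z$, and here I would split on the type of $z$, using that general position forces at most two constituent segments through any non-vertex point. If $z$ lies on a single constituent segment $[a_\nu,b_j]$ (interior of an edge or a diagonal), then $N(z)=\{a_\nu,b_j\}$ reaches only the indices $j$ and $\nu-1$, fewer than $k+1\ge 3$. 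If $z$ is a vertex, then $N(z)$ is exactly the star of segments incident to it, and a direct computation from the removed matching $\{[a_\nu,b_{\nu+\kappa}]\}$ shows that $a_\mu$ reaches every index except $\mu+\kappa$ and $b_\mu$ reaches every index except $\mu-\kappa-1$ (here one uses $\kappa\neq k$, so the omitted index is genuine); thus a vertex always misses exactly one $c_i$.

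The remaining, and main, case is $z$ an interior crossing of two diagonals $[a_{\nu_1},b_{j_1}]$ and $[a_{\nu_2},b_{j_2}]$, for which $|N(z)|\le 4$ and the reachable index set is $\{j_1,j_2,\nu_1-1,\nu_2-1\}$. For $k\ge 4$ this has size $\le 4<k+1$, so we are done at once; for $k=2$ the set $S_{2,2}$ is a hexagon with no diagonals, so no such $z$ exists. The delicate case is $k=3$, where the only diagonals of $S_{2,3}$ are the four vertex-disjoint ``order-$3$'' diagonals $D_\nu=[a_\nu,b_{\nu+2}]$; a crossing of $D_\nu$ and $D_\mu$ reaches $\{\nu+2,\nu+3,\mu+2,\mu+3\}$, which covers $\mathbb{Z}_4$ only when $\{\mu+2,\mu+3\}=\{\nu,\nu+1\}$, forcing $\mu=\nu+2$; but $D_\nu$ and $D_{\nu+2}$ are the ``parallel'' non-crossing pair, a contradiction. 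This exhausts all cases. I expect the two subtle points to be (a) the ``arrival along $E_i$'' step that justifies the reformulation via $N(z)$, and (b) the boundary case $k=3$, where the crude cardinality bound $|N(z)|\le 4$ is not decisive and one must invoke which diagonals actually cross.
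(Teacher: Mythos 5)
Your proof is correct and follows essentially the same route as the paper's: reduce ``$z$ sees $c_i$ through a $2$-path'' to ``$z$ sees $b_i$ or $a_{i+1}$ through a $1$-path,'' then split according to whether $z$ is a vertex of $P$, an interior point of a single constituent segment, or a crossing point of two diagonals, and count the indices reached in each case. The differences are ones of completeness in your favor: you rigorously justify the reduction step (which the paper asserts with ``it follows that''), and you work out the cases $k=2,3$ that the paper dismisses as ``easy to verify'' --- in particular the delicate $k=3$ case, where the cardinality bound alone is inconclusive and one must observe that the only covering configuration would force the non-crossing pair $D_\nu, D_{\nu+2}$ to cross.
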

\begin{proof}
The cases $k=2,3$ are easy to verify. Let $k \geq 4$ and
assume to the contrary that some point $z \in S_{2,k}$ sees all the points $c_0,\ldots,c_{k}$ through 2-paths in $S_{2,k}$.
It follows that for each $0 \leq i \leq k$, $z$ sees $b_i$ or  $a_{i+1}$ through a 1-path, and in particular, the set of vertices of $P$ that $z$ sees through a 1-path is of size $\geq k+1 \geq 5$. We reach a contradiction, by considering three cases:

\medskip \noindent \emph{Case 1: $z$ is a vertex of $P$.}
Thus $z=a_i$ or $z=b_i$ for some $i$, $0 \leq i \leq k$. But $a_i$ sees neither $b_{i+\kappa}$, nor $a_{i+\kappa+1}$ through a 1-path in $S_{2,k}$, and $b_i$ sees neither $b_{i-\kappa-1}$, nor $a_{i-\kappa}$ through a 1-path in $S_{2,k}$.

\medskip \noindent \emph{Case 2: $z$ is not a vertex of $P$, but $z$ lies on a boundary edge of $P$.} In this case, $z$ sees through a 1-path only two vertices of $P$, a contradiction.

\medskip \noindent \emph{Case 3: $z$ is not a vertex of $P$, but $z$ lies on a diagonal of $P$.} In this case, $z$ lies on at most two diagonals of $P$. (Here we use the assumption that no three diagonals of $P$ have a common point inside $P$.) Thus, $z$ sees through a 1-path at most four vertices of $P$. This is a contradiction, as $z$ must see at least 5 vertices of $P$, as was explained above.

\medskip \noindent This completes the proof.
\end{proof}

\noindent \textbf{Step 2: Construction of $S_{n,k}$ for $n>2,k \geq 2$.}
Given $n>2 $ and $k \geq 2$, we modify the construction of $S_{2,k}$ to obtain $S_{n,k}$, in the following way: We consider $S_{2,k}$, and for each $0 \leq i \leq k$ we add a simple polygonal path $\Gamma_i$ of length $n-2$ (with no two consecutive edges collinear) that starts at $c_i$ and lies outside the $(2k+2)$-gon $P$, such that the paths $\Gamma_j$ $(0 \leq j \leq k)$ are pairwise disjoint (see Figure~\ref{fig:fig3}).
For $0 \leq i \leq k$, define $C_i=B_i \cup \Gamma_i$,  and let
$$S_{n,k} = \bigcup_{i=0}^{k}C_i.$$

\begin{figure}
	\centering
	\includegraphics[width=.45\columnwidth]{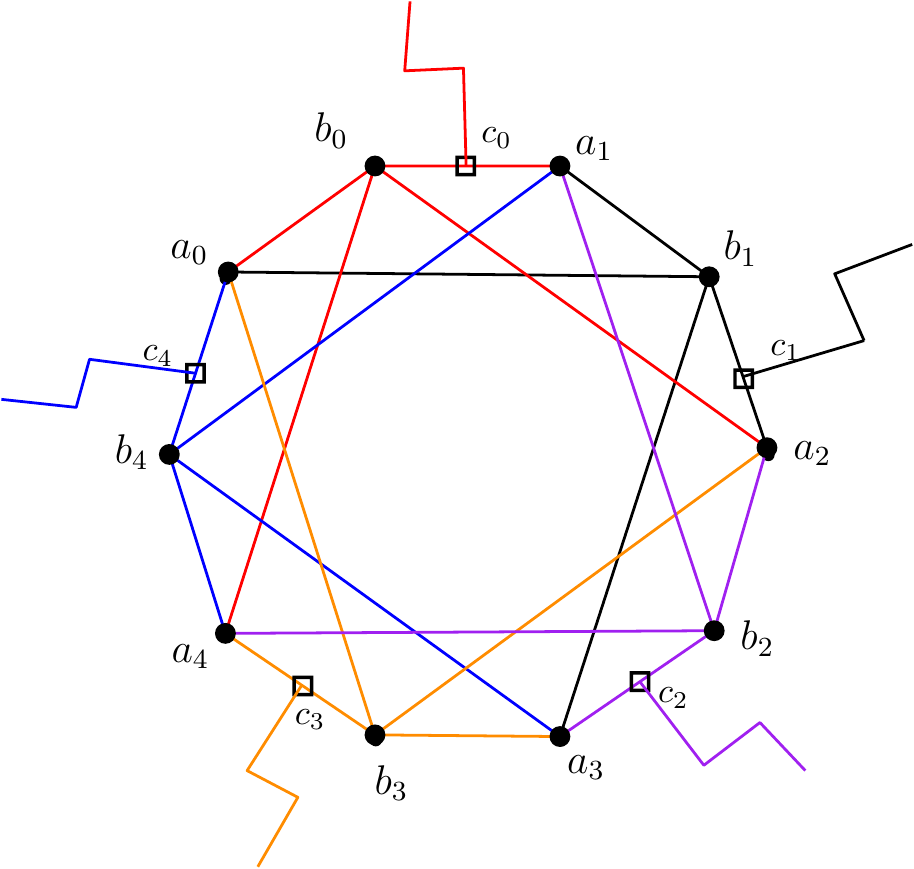}				
	\caption{An illustration of $S_{5,4}$. Each $C_i$ is colored in a different color.} 
\label{fig:fig3}
\end{figure}

\begin{claim}
$S_{n,k}$ satisfies the assertion of Theorem~\ref{Thm:Our2}.
\end{claim}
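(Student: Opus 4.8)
The plan is to prove the two halves of Theorem~\ref{Thm:Our2} for $S_{n,k}$ separately, in each case reducing to the properties of $S_{2,k}$ already recorded in Claims~\ref{cl:star1} and~\ref{cl:star2}. Throughout I write $e_i$ for the endpoint of the attached arc $\Gamma_i$ other than $c_i$, and I use repeatedly that, by construction, $\Gamma_i$ is a simple polygonal arc of exactly $n-2$ edges with no two consecutive edges collinear, that $\Gamma_i \cap \Gamma_j = \emptyset$ for $i \neq j$, and that $\Gamma_i$ meets $S_{2,k}$ only in its initial point $c_i$, which is the midpoint of the boundary edge $[b_i,a_{i+1}] \subseteq B_i$.

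For the positive direction I would keep the same common viewpoint $z=a_{k+1-\kappa}$ used in Claim~\ref{cl:star1}. Given $k$ points $x_1,\ldots,x_k$, assign each to one $C_j$ containing it; since there are $k+1$ sets $C_0,\ldots,C_k$, the pigeonhole principle lets me assume (after relabeling) that all the $x_i$ lie in $C_1 \cup \cdots \cup C_k$. If $x_i \in B_j$ then $z$ already sees $x_i$ through the $2$-path $\langle z, b_j, x_i\rangle$ exactly as in Claim~\ref{cl:star1}, and a $2$-path is an $n$-path since $n \geq 2$. If instead $x_i \in \Gamma_j$, I concatenate this with a traversal of $\Gamma_j$: the edge $[z,b_j]$, followed by $[b_j,c_j] \subseteq [b_j,a_{j+1}]$, followed by the sub-arc of $\Gamma_j$ from $c_j$ to $x_i$ (at most $n-2$ edges). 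This is a polygonal path in $S_{n,k}$ of length at most $2+(n-2)=n$, so $z$ sees every $x_i$ through an $n$-path.

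For the negative direction I would take the $k+1$ endpoints $e_0,\ldots,e_k$ as the bad set and argue by contradiction: suppose some $z \in S_{n,k}$ sees all of them through $n$-paths. The heart of the argument is a \emph{dead-end count}. Because $\Gamma_i$ attaches to the rest of $S_{n,k}$ only at $c_i$, any path from $z$ to $e_i$ must pass through $c_i$ and then run along the entire simple arc $\Gamma_i$; since no two consecutive edges of $\Gamma_i$ are collinear, covering this arc costs at least $n-2$ edges, leaving at most $2$ edges to get from $z$ to $c_i$. Hence $z$ sees $c_i$ through a $\leq 2$-path. Using that $c_i$ is interior to the boundary edge $[b_i,a_{i+1}]$ --- the only segment of $S_{2,k}$ through $c_i$, and a segment whose interior no diagonal crosses --- I would show that the final edge of this short path must lie along $[b_i,a_{i+1}]$, which forces $z$ to see $b_i$ or $a_{i+1}$ through a $1$-path. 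This is exactly the hypothesis reached in the middle of the proof of Claim~\ref{cl:star2}: $z$ sees, through $1$-paths, at least one vertex from each of the $k+1$ disjoint pairs $\{b_i,a_{i+1}\}$, hence at least $k+1 \geq 5$ vertices of $P$, which the three-case analysis there (vertex / boundary edge / diagonal, using that no three diagonals of $P$ are concurrent) rules out.

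The main obstacle is making the dead-end count fully rigorous, i.e. certifying that the reduction to a $\leq 2$-path to $c_i$ really lands inside $S_{2,k}$ and genuinely forces visibility of $b_i$ or $a_{i+1}$. One must rule out paths that squander their two remaining edges by dipping into another dead-end arc $\Gamma_j$, and check that $z$ cannot gain anything by sitting on some $\Gamma_j$ itself; both are handled by the same two structural facts --- the $\Gamma_j$ are pairwise disjoint dead ends attached only at the points $c_j$, and two distinct boundary edges $[b_i,a_{i+1}]$ and $[b_j,a_{j+1}]$ share no point --- which together leave no room for a shortcut within the edge budget. Once this is pinned down, the contradiction is inherited verbatim from Claim~\ref{cl:star2}.
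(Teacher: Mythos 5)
Your proposal is correct and follows essentially the same route as the paper: the positive direction uses the same pigeonhole argument and the same viewpoint $a_{k+1-\kappa}$ (with the path $z \to b_j \to c_j \to$ sub-arc of $\Gamma_j$), and the negative direction uses the same bad set (the outer endpoints of the $\Gamma_i$) together with the dead-end observation that an $n$-path to such an endpoint must spend at least $n-2$ edges inside $\Gamma_i$, reducing everything to Claim~\ref{cl:star2}. The only notable difference is the treatment of a viewpoint $z$ lying on some arc $\Gamma_j$: the paper disposes of this case cleanly by noting that then $c_j \in S_{2,k}$ also sees all $k+1$ endpoints through $n$-paths (every such path from $z$ must pass through $c_j$, and sub-paths are again $n$-paths), which folds the case back into the first one, whereas your edge-budget accounting and unpacking of the intermediate step of Claim~\ref{cl:star2}'s proof is more laborious but sound.
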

\begin{proof}
Same as in the proof of Claim~\ref{cl:star1}, given $x_1,\ldots,x_k \in S_{n,k}$ we may assume that $\{x_1,\ldots,x_k \} \subset C_1 \cup \ldots \cup C_k $. In this case, $a_{k+1-\kappa}$ sees each of the points $b_1, \ldots,b_k$ through a 1-path, and thus, sees every point in $C_1 \cup \ldots \cup C_k $ through an $n$-path.

On the other hand, we claim that the $k+1$ outer endpoints of $\Gamma_0 , \ldots , \Gamma_{k} $ are not seen by any common point $x$ through $n$-paths. Indeed, assume on the contrary that some $x \in S_{n,k}$ sees all these points through  $n$-paths. If $x \in S_{2,k}$, then this assumption clearly implies that $x$ sees all the points $c_0, \ldots, c_{k}$ through 2-paths, contradicting Claim~\ref{cl:star2}. If $x \not \in S_{2,k}$ then $x \in \Gamma_i$ for some $i$. In this case, our assumption implies that even $c_i$ sees the $k+1$ outer endpoints of $\Gamma_0 , \ldots , \Gamma_{k} $ through $n$-paths. However, $c_i \in S_{2,k}$, and thus we obtain a contradiction, same as in the first case.
\end{proof}

\noindent This completes the proof of Theorem~\ref{Thm:Our2}.
\end{proof}

\section{Open problems}
The proof method of Theorem~\ref{Thm:Our1} is not constructive, and the resulting set $T$ does not admit any `nice' topological structure (e.g., being a Borel set). Thus, it will be interesting to determine whether one may add some topological restriction on $T$ to the conditions of Theorem~\ref{Thm:Our1}.

\end{document}